\newcommand\shorttitle{Rigidity of Spherically Symmetric Finsler Metrics}
\newcommand\authors{G. Shanker and S. Rani}
	\ifodd\value{page}
\authors
\shorttitle
\newtheorem{theorem}{Theorem}[section]
\newtheorem{prop}[theorem]{Proposition}
\newtheorem{example}[theorem]{Example}
\newtheorem{cor}[theorem]{Corollary}
\newtheorem{remark}{\sc Remark}
\newtheorem{lemma}{\sc Lemma}[section]
\newtheorem{corollary}{\sc Corollary}[section]
\newtheorem{definition}{\sc Definition}[section]
\newcommand{\be}{\begin{eqnarray}}
\newcommand{\ee}{\end{eqnarray}}
\newcommand{\Be}{\begin{eqnarray*}}
	\newcommand{\Ee}{\end{eqnarray*}}
\newcommand{\bee}{\begin{equation}}
\newcommand{\eee}{\end{equation}}
\newcommand{\ba}{\begin{array}}
	\newcommand{\ea}{\end{array}}
\newcommand{\bl}{\begin{lemma}}
	\newcommand{\el}{\end{lemma}}
\newcommand{\bd}{\begin{definition}}
	\newcommand{\ed}{\end{definition}}
\newcommand{\bt}{\begin{theorem}}
	\newcommand{\et}{\end{theorem}}
\newcommand{\bp}{\begin{proof}}
	\newcommand{\ep}{\end{proof}}
\newcommand{\bi}{\begin{itemize}}
	\newcommand{\ei}{\end{itemize}}
\newcommand{\br}{\begin{remark}}
	\newcommand{\er}{\end{remark}}
\newcommand{\bc}{\begin{corollary}}
	\newcommand{\ec}{\end{corollary}}
\newcommand{\bex}{\begin{example}}
	\newcommand{\eex}{\end{example}}
\begin{document}
	\afterpage{\cfoot{\thepage}}
	\clearpage
	\date{}
	\title{\textbf{On the Rigidity of Spherically Symmetric Finsler Metrics with Isotropic $E$-Curvature}}
	\maketitle
\begin{center}
		\author{\textbf{Gauree Shanker, Sarita Rani\footnote{Corresponding author}}}
\end{center}
	\begin{center}
		Department of Mathematics and Statistics\\
		School of Basic and Applied Sciences\\
		Central University of Punjab, Bathinda, Punjab-151 001, India\\
		Email:  grshnkr2007@gmail.com, saritas.ss92@gmail.com
\end{center}
\begin{center}
	\textbf{Abstract}
\end{center}
\begin{small}
	In the current  paper, first we give the correct version of the formula for  mean  Berwald curvature of a spherically symmetric Finsler metric given in paper \cite{YCheWSon2015}. Further, we  establish  differential equations characterizing projectively as well as dually flat spherically symmetric Finsler metrics. Finally, we obtain  a rigidity result on spherically symmetric Finsler metrics with isotropic $E$-Curvature.
\end{small}\\
	\textbf{Mathematics Subject Classification:} 53B40, 53C60, 58B20, 53C24.\\
	\textbf{Keywords and Phrases:} Spherically symmetric Finsler metric, Isotropic $E$-curvature, projectively flat, dually flat, rigidity.

\section{Introduction}
Finsler spaces, a generalization of Riemannian spaces were named after a German mathematician Paul Finsler who studied the spaces with metric defined by the positive fourth root of a fourth order differential form. Every Riemannian space is Finsler space but not vice-versa. In Riemann-Finsler geometry, it is natural to ask under what conditions a Finsler manifold becomes Riemannian one. This is main problem of what is called Finsler rigidity theory. The best well known result toward this problem are  given by Akbar-Zadeh \cite{HAZad1988} and Numata \cite{SNum1975}. Many authors (\cite{CWKimJWJim2000}, \cite{CWKimJWJim2003}, \cite{ZShe2005}, \cite{BYWu2010}) have worked on this problem with certain assumptions.\\

At the end of $20^{th}$ century, McCarthy and  Rutz \cite{PJMCSFR1993} constructed spherically symmetric Finsler metric of $ \mathbb{R}^4 $ which is invariant under the transformation group of $SO(3)$ rotations. Further, in 2010, Zhou \cite{LZhou2010Arxv} introduced the general spherically symmetric Finsler metric on $ \mathbb{R}^n $ which is invariant under the rotations of  $ \mathbb{R}^n$. He proved that a spherically symmetric Finsler metric $ F(x,y) $ in $ \mathbb{R}^n $ must be of the form $ F= \phi( \arrowvert x \arrowvert, \arrowvert y\arrowvert, \left\langle x, y \right\rangle ), $ where $ \phi $ is a positive smooth function, $ x \in \mathbb{R}^n, \ y \in T_x \mathbb{R}^n, \ \  \left\langle \ , \ \right\rangle $ is the standard inner product on $ \mathbb{R}^n.$\\

Guo et al. \cite{EGHLXM2013} proved that any  spherically symmetric Finsler metric of isotropic Berwald curvature must be a Randers metric. Further, in  \cite{BNajafi2015Arxv}, Najafi classified  locally dually flat and locally projectively flat spherically symmetric Finsler metrics.\\

Recall \cite{ChernShenRFG} that a Finsler metric $F$ on an open subset $\mathcal{U}$ of $\mathbb{R}^n$ is called projectively flat if and only if all the geodesics are straight in $\mathcal{U}$. The real starting point of the investigations of projectively flat metrics is Hilbert's fourth problem \cite{Hilbert4prob}. During the International Congress of Mathematicians,  held in Paris (1900), Hilbert asked about the spaces in which the shortest curves between any pair of points are straight lines. The first answer was given by Hilbert's student G. Hamel in 1903. In \cite{GHAM1903}, Hamel found necessary and sufficient conditions for a  space satisfying a  system of axioms, which is a modification of Hilbert’s system of axioms for Euclidean geometry, to be projectively flat. After Hamel, many authors (see  \cite{SAB.GS.2016PFDF}, \cite{GS.SAB.2015PF}, \cite{GSK2017}, \cite{GS.RKS.RDSK.2017PFDF}, \cite{RY.GS.2013PF}) have worked on this topic. \\

The concept of dually flatness in Riemannian geometry was given by Amari and Nagaoka in \cite{S.I.Ama H.Nag} while they were studying information geometry. Information geometry provides mathematical science with a new framework for analysis. Information geometry is an investigation of differential geometric structure in probability distribution. It is also applicable in statistical physics, statistical inferences etc. Z. Shen \cite{ShenRFGAIG} extended the notion of dually flatness in Finsler spaces.  After Shen's work, many authors  have worked on this topic (see  \cite{SAB.GS.2016PFDF}, \cite{GS.SAB.2017PFDF}, \cite{GSK2017},  \cite{GS.RKS.RDSK.2017PFDF}). Recently, in \cite{GSK2017}, we have worked on projectively flat and dually flat Finsler metrics. Here, we extend our study  to spherically symmetric Finsler spaces. The main aim of this paper is to prove the following rigidity result:

\begin{theorem}{\label{Main_Result}}
	Let $F=u\psi(r,s)$ be a projectively flat and dually flat spherically symmetric Finsler metric of isotropic $E$-curvature, where $ r = \arrowvert x \arrowvert,\ u= \arrowvert y\arrowvert,\ v= \left\langle x, y \right\rangle,\ s=\dfrac{v}{u}$. Then $F$ must be a Riemannian metric. 
\end{theorem}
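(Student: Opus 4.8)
The plan is to convert each of the three hypotheses into a differential equation for $\psi(r,s)$ and then to play them off against one another. Since $F=u\psi$ is Riemannian exactly when $F^2=u^2\psi^2$ is a quadratic form in $y$, and for a spherically symmetric metric this happens precisely when $\psi^2=\lambda(r)+\mu(r)s^2$ for suitable functions $\lambda,\mu$, the whole problem reduces to forcing $\psi^2$ to be a polynomial of degree at most two in $s$ with vanishing linear term. I would take as inputs the projective-flatness equation and the dual-flatness equation established earlier in the paper, together with the isotropy equation produced by the corrected mean Berwald curvature formula, and aim to extract this polynomial structure.

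First I would record the projective-flatness equation as $\psi_r=s\psi_{rs}+r\psi_{ss}$ and the dual-flatness equation as $s\psi_r\psi_s+s\psi\psi_{rs}+r\psi_s^{2}+r\psi\psi_{ss}=2\psi\psi_r$. The crucial first move is to substitute the projective identity $r\psi_{ss}+s\psi_{rs}=\psi_r$ into the dual-flatness equation; every second-order derivative then cancels and one is left with the purely first-order relation
\[
\psi_r\,(\psi-s\psi_s)=r\,\psi_s^{2},
\]
which I call $(\star)$. Next, because $F$ is projectively flat the $x^i$-component of its spherically symmetric spray $G^i=uP\,y^i+u^2Q\,x^i$ vanishes, leaving $G^i=uP\,y^i$ with $P=(s\psi_r+r\psi_s)/(2r\psi)$, so that $\partial G^m/\partial y^m=u\Theta$ with $\Theta=(n+1)P$ and $E_{ij}=\tfrac12\frac{\partial^2}{\partial y^i\partial y^j}\!\left(u\Theta\right)$. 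Matching this against the isotropic form $E_{ij}=\tfrac{n+1}{2}c\,F^{-1}h_{ij}$ reduces, on comparing the $\delta_{ij}$-components, to $\Theta-s\Theta_s=(n+1)c(\psi-s\psi_s)$; simplifying this with the projective equation and with $(\star)$ collapses the isotropy hypothesis to
\[
\frac{\psi_s}{2\psi}=c(r)\,(\psi-s\psi_s),
\]
which I call $(\star\star)$, where $c=c(r)$ by spherical symmetry.

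It remains to analyse the overdetermined system formed by $(\star)$, $(\star\star)$ and the full second-order projective equation. Eliminating $\psi-s\psi_s$ between $(\star)$ and $(\star\star)$ gives $\psi_r=2c\,r\,\psi\,\psi_s$, and together with $(\star\star)$ this fixes both $\psi_r$ and $\psi_s$ algebraically; integrating the resulting first-order system (for $c\neq0$) produces solutions of square-root type, $\psi=m(r)\big(s+\sqrt{s^2+k(r)}\big)$, whose square $\psi^2=m(r)^2\big(2s^2+k(r)+2s\sqrt{s^2+k(r)}\big)$ is \emph{not} a polynomial in $s$. The main obstacle is precisely to exclude this branch: I would need to extract enough additional information from the second-order projective equation, and from the requirement that the isotropy factor depend on $x$ alone, to force the square-root term to drop out, leaving $\psi^2=\lambda(r)+\mu(r)s^2$ and hence $F$ Riemannian. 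I expect this to be the delicate heart of the argument and would examine it with particular care, since the Funk metric $\psi=\big(s+\sqrt{s^2+1-r^2}\big)/(1-r^2)$ lies exactly on this square-root branch and, by classical results, is simultaneously projectively flat, dually flat and of isotropic $E$-curvature (with $c\equiv\tfrac12$). Ruling such metrics out is therefore the critical point on which the whole statement hinges, and I would verify very carefully that the stated hypotheses, through the corrected curvature identity, genuinely suffice to do so.
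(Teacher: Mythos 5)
Your proposal does not reach the stated conclusion --- you stop exactly at the point of excluding the square-root branch $\psi=m(r)\big(s+\sqrt{s^2+k(r)}\big)$ --- but the obstruction you identify is genuine, and it is worth recording precisely where it surfaces in the paper's own argument. The paper first shows, via Proposition \ref{Cond_Projectively_and_Dually}, that projective plus dual flatness is equivalent to $\tfrac{1}{r}\psi_r=k_1\psi\psi_s$ and $\psi_s=k_1\psi(\psi-s\psi_s)$, whence $P=\tfrac{k_1}{2}\psi$ and $Q=0$; this is the same information as your $(\star)$ combined with the Hamel equation, so up to that point the two routes agree. The paper then integrates the isotropy condition $P-sP_s=c(x)(\psi-s\psi_s)$ to get $P=c(x)\psi-s\gamma(r)$ and compares with $P=\tfrac{k_1}{2}\psi$, obtaining $\left(c(x)-\tfrac{k_1}{2}\right)\psi=s\,\gamma(r)$. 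The next displayed step, $\psi=\tfrac{2s\,\gamma(r)}{2c(x)-k_1}$, silently assumes $2c(x)\neq k_1$; but in that case $\psi$ is linear in $s$, so $\psi-s\psi_s\equiv0$ and $F$ is not a Finsler metric at all. Every nondegenerate metric satisfying the hypotheses is forced into the complementary case $2c(x)=k_1$, $\gamma\equiv0$, where that equation carries no information and the subsequent formulas $P=k(r)s$ and $\psi=k_2(r)\sqrt{1+s^2k(r)}$ are unsupported (the coefficient $k(r)$ is of the form $0/0$).

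Your suspicion that the statement itself fails is therefore well founded. Since $P=\tfrac{k_1}{2}\psi$ gives identically $P-sP_s=\tfrac{k_1}{2}(\psi-s\psi_s)$, the isotropic $E$-curvature hypothesis is automatically satisfied with $c\equiv\tfrac{k_1}{2}$ and adds nothing to projective plus dual flatness. The Funk metric of the unit ball, $\psi=\tfrac{s+\sqrt{s^2+1-r^2}}{1-r^2}$, is spherically symmetric, satisfies $F_{x^k}=FF_{y^k}$ (hence is projectively and dually flat with $k_1=1$), and has $S=\tfrac{n+1}{2}F$, hence isotropic $E$-curvature with $c=\tfrac12$; it is a non-Riemannian Randers metric lying exactly on the square-root branch you describe. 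So the ``delicate heart'' you flag cannot be repaired: no argument from these hypotheses can exclude that branch, and your refusal to assert the conclusion is the correct judgement. If you want a salvageable statement, you would need an additional hypothesis (for instance $k_1=0$, which kills the branch and does force $\psi^2$ quadratic in $s$), not a better computation.
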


Current paper is organized as follows:\\
In section 2,  we give basic definitions and results to be used in the study of projectively flat and dually flat spherically symmetric Finsler metrics of isotropic $E$-curvature. In section 3, we provide a formula for  mean Berwald curvature of spherically symmetric Finsler metrics which is a modified version of the formula  given in \cite{YCheWSon2015}.
Further, in section 4, we establish two differential equations completely describing projectively as well as  dually flat  spherically symmetric Finsler metrics.   Finally, we prove our main  rigidity result (i.e. theorem \ref{Main_Result}).

\section{Preliminaries}
In this section, we discuss some basic definitions and results required to study afore said spaces.  We refer  \cite{BCS} and  \cite{ChernShenRFG} for notations and further details.
\begin{definition}
	An n-dimensional real vector space $V$ is called a \textbf{Minkowski space}
	if there exists a real valued function $F:V \longrightarrow [0,\infty)$, called Minkowski norm, satisfying the following conditions: 
	\begin{enumerate}
		\item[(a)]  $F$ is smooth on $V \backslash \{0\},$ 
		\item[(b)] $F$ is positively homogeneous, i.e., $ F(\lambda v)= \lambda F(v), \ \ \forall \ \lambda > 0, $
		\item[(c)] For a basis $\{v_1,\ v_2, \,..., \ v_n\}$ of $V$ and $y= y^iv_i \in V$, the Hessian matrix \\
		$\left( g_{_{ij}}\right)= \left( \dfrac{1}{2} F^2_{y^i y^j} \right)  $ is positive-definite at every point of $V \backslash \{0\}.$
	\end{enumerate} 
\end{definition}


\begin{definition}	
	A connected smooth manifold $M$ is called a Finsler space if there exists a function $F: TM \longrightarrow [0, \infty)$ such that $F$ is smooth on the slit tangent bundle $TM \backslash \{0\}$ and the restriction of $F$ to any $T_pM, \ p \in M$, is a Minkowski norm. Here, $F$ is called a Finsler metric. 
\end{definition}
The geodesic spray coefficients of a Finsler metric $F$ are given by 
$$ G^i \ := \dfrac{1}{4} g^{il} \left\{ \left( F^2\right)_{x^k y^l} y^k - \left( F^2\right)_{x^l}  \right\}, \ \ \text{where}\ \   g^{ij}=\left( g_{ij} \right)^{-1}.$$

Next, we discuss spherically symmetric Finsler metrics in brief. 

\begin{definition}
	A spherically symmetric Finsler metric on $ \mathbb{R}^n$ is a metric that is invariant under all the rotations of $ \mathbb{R}^n.$
\end{definition}
Recall \cite{LZhou2010Arxv} the following result:
\begin{theorem}
	A spherically symmetric Finsler metric $ F(x,y) $ in $ \mathbb{R}^n $ is  of the form $ F= \phi( \arrowvert x \arrowvert, \arrowvert y\arrowvert, \left\langle x, y \right\rangle ), $ where $ x \in \mathbb{R}^n, \ y \in T_x \mathbb{R}^n, \ \phi $ is a positive smooth function, and $  \  \left\langle \ , \ \right\rangle $ is the standard inner product on $ \mathbb{R}^n.$
\end{theorem}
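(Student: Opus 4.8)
The plan is to read this purely as a statement in invariant theory: a function $F$ on $T\mathbb{R}^n \cong \mathbb{R}^n \times \mathbb{R}^n$, smooth off the zero section and invariant under the diagonal action $(x,y)\mapsto(Rx,Ry)$ of $SO(n)$, depends only on the three basic invariants $|x|$, $|y|$, $\langle x,y\rangle$. The easy direction is immediate: rotations are isometries, so each of these three quantities is preserved by the diagonal action, and hence any function of them is spherically symmetric. The whole content lies in the converse, and my strategy is to prove it by showing that the three invariants \emph{separate the $SO(n)$-orbits}, so that $F$, being constant on orbits, must factor through the invariant triple.

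First I would reduce every pair $(x,y)$ to a canonical representative determined only by the invariants. For $x\neq 0$, pick $R\in SO(n)$ with $Rx=|x|\,e_1$; the residual freedom is the copy of $SO(n-1)$ fixing $e_1$, which I use to rotate $Ry$ into the $e_1e_2$-plane. The representative is then $Rx=(|x|,0,\dots,0)$ and $Ry=(a,b,0,\dots,0)$ with $a=\langle x,y\rangle/|x|$ (since the $e_1$-component of $Ry$ equals $\langle Ry,Rx\rangle/|x|=\langle x,y\rangle/|x|$) and $b=\sqrt{|y|^2-a^2}\ge 0$, which by Cauchy--Schwarz is a well-defined function of the three invariants; the case $x=0$ is handled by rotating $y$ to $|y|\,e_1$. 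Thus $F(x,y)$ equals its value at this canonical representative, giving $F=\phi(|x|,|y|,\langle x,y\rangle)$. Equivalently, two pairs sharing the three invariants have equal Gram matrices and so are related by some element of $O(n)$.

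The hard part is twofold. First, the argument must land in $SO(n)$ and not merely in $O(n)$: once $Ry$ sits in the $e_1e_2$-plane, forcing its $e_2$-component to be non-negative may demand reversing the sign of $e_2$, an orientation-reversing move. For $n\ge 3$ this is harmless, since a rotation by $\pi$ in the $e_2e_3$-plane flips $e_2$ while fixing $e_1$ and lies in $SO(n)$; for $n=2$ there is a genuine extra invariant, the signed area $x_1y_2-x_2y_1$ (whose square is $|x|^2|y|^2-\langle x,y\rangle^2$), so the theorem is really one for $n\ge 3$. Second, I must exhibit $\phi$ as an honestly smooth function. On the open set where $x$ and $y$ are linearly independent, the map $(x,y)\mapsto(|x|^2,|y|^2,\langle x,y\rangle)$ is a submersion whose fibers are exactly the (connected) $SO(n)$-orbits, so $F$ descends to a smooth $\phi$ there; the delicate locus is where $x$ and $y$ are parallel (in particular $x=0$ or $y=0$), where this map degenerates. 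To cross that locus I would invoke the First Fundamental Theorem for $O(n)$ — for $n\ge 3$ the polynomial invariants of two vectors are generated by $|x|^2,|y|^2,\langle x,y\rangle$, with no further $SO(n)$-invariant — together with G.~Schwarz's theorem that every smooth invariant of a compact group action is a smooth function of a set of generating polynomial invariants. This produces $F=\tilde\phi(|x|^2,|y|^2,\langle x,y\rangle)$ with $\tilde\phi$ smooth, and hence the claimed form with $\phi$ smooth and positive.
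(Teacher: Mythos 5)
The paper offers no proof of this statement at all: it is recalled verbatim from Zhou \cite{LZhou2010Arxv}, so there is no internal argument to compare yours against. Taken on its own terms, your proof is correct and essentially the standard one: the easy direction by invariance of the three quantities, and the converse by showing that $|x|$, $|y|$, $\langle x,y\rangle$ separate orbits (equal invariants means equal Gram matrices, hence conjugacy under $O(n)$, upgraded to $SO(n)$ for $n\ge 3$ via a rotation in the $e_2e_3$-plane), with smoothness of $\phi$ handled by the submersion argument away from the parallel locus and by the First Fundamental Theorem together with Schwarz's theorem across it. Your caveat about $n=2$ is a genuine and worthwhile observation rather than a defect: with ``rotations'' read as $SO(2)$, the signed area $x_1y_2-x_2y_1$ is an extra invariant and the statement as written fails (e.g.\ a Randers-type perturbation $|y|+\epsilon(x_1y_2-x_2y_1)/\sqrt{1+|x|^2}$ is $SO(2)$-invariant but not a function of the three listed quantities), so the theorem implicitly requires either $n\ge 3$ or invariance under all of $O(n)$; the paper and its source gloss over this. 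The only mild overkill is invoking Schwarz's theorem: since a Finsler metric need only be smooth off the zero section, and the set where $x$ and $y$ are dependent but both nonzero can also be reached by a limiting/local-chart argument, a more elementary treatment of smoothness is possible, but your route is clean and fully rigorous.
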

Write $ F=\phi(r, u, v), $ where  $ r = \arrowvert x \arrowvert,\ u= \arrowvert y\arrowvert,\ v= \left\langle x, y \right\rangle.$\\
Since the Finsler metric $F$ satisfies 
$$ F(\lambda\ y)= \lambda\ F(y)\ \ \  \forall\ \  \lambda >0, $$
$\phi$ is positively homogeneous in $u$ and $v$ of degree one, i.e.,
$$ \phi= u\ \phi_u+ v\ \phi_v  \ \ \text{and }\ \  F= \arrowvert y\arrowvert\ \psi(r,s),\ \   \text{where}\ \  s=\dfrac{v}{u}.$$
Recall \cite{XMLZ2014Arxv}  that the geodesic spray coefficients of a spherically symmetric Finsler metric  $F$ are given by
\begin{equation}{\label{geocoeff}}
G^i=uP y^i+u^2Q x^i,
\end{equation}
where 
\begin{equation}{\label{valQ}}
Q=\dfrac{r \psi_{ss}+s \psi_{rs}-\psi_r}{2r(\psi-s \psi_s+(r^2-s^2) \psi_{ss})}
\end{equation}
and 
\begin{equation}{\label{valP}}
P=\dfrac{s \psi_r+r \psi_s}{2r \psi}-\dfrac{s \psi+(r^2-s^2) \psi_{s}}{\psi}\ Q.
\end{equation}
\begin{definition}
	The Berwald curvature of a Finsler metric $F$ is defined, in local co-ordinates, as follows: 
	$$ B:= B^i_{jkl}\ dx^j \otimes dx^k \otimes dx^j=l \otimes \dfrac{\partial}{\partial x^i},$$
	where $B^i_{jkl}=\dfrac{\partial^3 G^i}{\partial y^j \partial y^k \partial y^l}$ and $G^i$ are the geodesic spray coefficients. \\
	A Finsler metric $F$ is called a Berwald metric if its Berwald curvature is zero. 
\end{definition}
Next, we state following lemma for later use:
\begin{lemma}{\label{Lemma}}\cite{XMLZ2014Arxv}
	A spherically symmetric Finsler metric  $F=u \psi(r,s)$ is a Berwald metric if and only if $P$ and $Q$ in its geodesic spray coefficients satisfy  the following   equations:
	\begin{equation}
	\begin{split}
	sP_s-P&=0,\\
	P_{ss}&=0,\\
	sQ_{ss}-Q_s&=0,\\
	Q_{sss}&=0.
	\end{split}
	\end{equation}
\end{lemma}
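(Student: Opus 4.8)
The plan is to invoke the definition directly: $F$ is a Berwald metric precisely when its Berwald tensor $B^i_{jkl}=\partial^3 G^i/(\partial y^j\partial y^k\partial y^l)$ vanishes identically, and since each $G^i$ is positively homogeneous of degree two in $y$, this is equivalent to the $G^i$ being quadratic forms in $y$ with coefficients depending only on $x$. So I would start from the explicit expression \eqref{geocoeff}, $G^i=uPy^i+u^2Qx^i$, and differentiate it three times in $y$, keeping the two summands $uPy^i$ and $u^2Qx^i$ separate.

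First I would record the elementary derivatives. Since $r=\arrowvert x\arrowvert$ is independent of $y$, the functions $P=P(r,s)$ and $Q=Q(r,s)$ depend on $y$ only through $s=v/u$, with $\partial u/\partial y^j=y^j/u$ and $\partial s/\partial y^j=x^j/u-sy^j/u^2$. Feeding these into the chain rule and collecting terms, $\partial^3 G^i/(\partial y^j\partial y^k\partial y^l)$ becomes a homogeneous degree $-1$ expression that I would organize as $u^{-1}$ times a sum of tensor monomials carrying one upper index — namely $y^i$, $x^i$ and $\delta^i_{(j}$ — paired with symmetric lower-index factors built from $y_k$ and $x_k$. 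The scalar coefficient attached to each such monomial is a polynomial in $s$ whose building blocks are $P,P_s,P_{ss},P_{sss}$ (arising from the $uPy^i$ part) and $Q,Q_s,Q_{ss},Q_{sss}$ (arising from the $u^2Qx^i$ part).

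The crux is then a linear-algebra step. Because $x$ and $y$ are independent generic vectors, the tensor monomials $y^i x_jx_kx_l,\ \delta^i_{(j}x_kx_{l)},\ x^i y_jy_ky_l,\ \ldots$ that appear are linearly independent, so $B^i_{jkl}\equiv 0$ forces every scalar coefficient to vanish separately. I expect the coefficients coming from the $uPy^i$ part to collapse to multiples of $sP_s-P$ and $P_{ss}$, and those from the $u^2Qx^i$ part (where $x^i$ is constant in $y$) to collapse to multiples of $sQ_{ss}-Q_s$ and $Q_{sss}$, yielding exactly the four displayed equations. The main obstacle is the bookkeeping needed to reduce the many a priori monomials to these four independent conditions, repeatedly using the identities $y_jy^j=u^2$ and $x_jy^j=v=su$ to eliminate redundant terms.

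For the converse I would integrate the four equations in $s$: $P_{ss}=0$ together with $sP_s-P=0$ forces $P=b(r)s$, while $Q_{sss}=0$ together with $sQ_{ss}-Q_s=0$ forces $Q=a(r)+c(r)s^2$. Substituting back, $uPy^i=b(r)\,v\,y^i$ and $u^2Qx^i=\big(a(r)u^2+c(r)v^2\big)x^i$ are manifestly quadratic polynomials in $y$, so each $G^i$ is quadratic and hence $B^i_{jkl}\equiv 0$, establishing that $F$ is a Berwald metric.
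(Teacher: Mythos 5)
The paper does not prove this lemma at all --- it is quoted verbatim from the cited reference of Mo and Zhou --- so there is no internal proof to compare against; your argument is correct and is essentially the standard derivation used there: $B^i_{jkl}=0$ is equivalent to each $G^i=uPy^i+u^2Qx^i$ being a quadratic form in $y$, the four displayed equations are exactly the conditions $P=b(r)s$ and $Q=a(r)+c(r)s^2$, and your converse direction is clean and complete. The one step that deserves a word of care in the forward direction is the linear-independence claim: the scalar coefficients attached to the tensor monomials depend on $y$ through $s$, so to conclude that each coefficient vanishes separately you should either vary $y$ over the rotations fixing $x$ (which keeps $s$ constant and requires $n\ge 2$), or, more simply, contract $B^i_{jkl}=0$ against a covector orthogonal to $x$ to isolate the $uPy^i$ summand, which forces $uP$ to be linear and hence $u^2Q$ to be quadratic in $y$, after which the four equations drop out as in your converse.
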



In the study of spherically symmetric Finsler metrics, concepts of locally projectively flat and locally dually flat are quite important. Here, we discuss some related basic definitions and results.\\
\begin{definition}
	Two Finsler metrics $F$ and $\bar{F}$ on a manifold $M$ are called projectively equivalent if they have same geodesics as point sets, i.e., for any geodesic $\bar{\sigma}(\bar{t}) $ of $\bar{F},$ there is a geodesic $\sigma(t):=\bar{\sigma}(\bar{t}(t))$ of $F,$ where $\bar{t}=\bar{t}(t)$ is oriented re-parametrization, and vice-versa.
\end{definition}
Next, we recall (\cite{ChernShenRFG}, \cite{GHAM1903}) the following theorem:
\begin{theorem}{\label{thm5.1}}
	Let $F$ and $\bar{F}$ be two Finsler metrics on a manifold $M.$ Then $F$ is projectively equivalent to $\bar{F}$ if and only if 
	$$ {F}_{x^ky^\ell}y^k-{F}_{x^\ell}=0. $$ Here the spray coefficients are related by
	$G^i=\bar{G}^i+Py^i,$ where $ P=\dfrac{F_{x^k}y^k}{2F}$ is called projective factor of $F.$
\end{theorem}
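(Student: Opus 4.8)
The plan is to read the statement as Hamel's characterization of projective flatness. The displayed condition ${F}_{x^k y^\ell}y^k - {F}_{x^\ell}=0$ involves only $F$, which signals that the reference metric $\bar F$ must be the standard flat (Euclidean) metric on the relevant domain in $\mathbb{R}^n$, whose geodesics are straight lines and whose spray vanishes, $\bar G^i = 0$. With this reading the asserted equivalence becomes ``$F$ has straight geodesics (is projectively flat) if and only if $F_{x^k y^\ell}y^k - F_{x^\ell} = 0$,'' while the role of the relation $G^i = \bar G^i + P y^i$ is to record the general projective-equivalence formula together with the explicit projective factor. I would therefore split the argument into a spray-level reduction and a direct computation turning the spray condition into the stated partial differential equation.

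First I would recall the purely spray-theoretic fact underlying projective equivalence: two sprays $G^i$ and $\bar G^i$ have the same geodesics as unparametrized point sets if and only if $G^i - \bar G^i = P y^i$ for some function $P = P(x,y)$ positively homogeneous of degree one in $y$. This follows by writing a geodesic of $\bar G$ as $\ddot x^i + 2\bar G^i = 0$ and substituting an orientation-preserving reparametrization $t \mapsto \bar t(t)$ as in the definition of projective equivalence; the change of parameter contributes precisely a term proportional to $\dot x^i = y^i$, so matching the two geodesic equations forces the spray difference to be of the form $P y^i$, and conversely any such difference can be absorbed by a reparametrization. Specializing to the Euclidean $\bar F$ with $\bar G^i = 0$, projective equivalence of $F$ to $\bar F$ is exactly the condition $G^i = P y^i$.

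The heart of the proof is then a direct computation relating $G^i$ to the Hamel expression. Starting from $G^i = \tfrac14 g^{il}\{ (F^2)_{x^k y^l}y^k - (F^2)_{x^l}\}$, I would use $(F^2)_{x^l} = 2F F_{x^l}$ and $(F^2)_{x^k y^l} = 2 F_{y^l}F_{x^k} + 2 F F_{x^k y^l}$, together with the defining relation $F_{x^k}y^k = 2PF$ (so that $P = F_{x^k}y^k/(2F)$ is automatically homogeneous of degree one in $y$), the Euler relations $F_{y^k}y^k = F$ and $F_{y^i y^j}y^j = 0$, and their consequence $g_{ij}y^j = F F_{y^i}$, equivalently $g^{il}F_{y^l} = y^i/F$. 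Combining these yields the clean identity
$$ G^i = P\, y^i + \frac{F}{2}\, g^{il}\bigl( F_{x^k y^l}y^k - F_{x^l}\bigr). $$
Since $F > 0$ and $(g^{il})$ is invertible, the correction term vanishes if and only if $F_{x^k y^l}y^k - F_{x^l} = 0$. Reading this off gives both directions at once: $G^i = P y^i$ holds precisely when the Hamel equation holds, and in that case the spray relation is $G^i = \bar G^i + P y^i$ with the stated projective factor.

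I expect the main obstacle to be the spray-level step rather than the computation: one must argue carefully that projective equivalence of the \emph{metrics} corresponds exactly to a spray difference of the form $P y^i$ with $P$ homogeneous of degree one in $y$, handling the reparametrizations and the required homogeneity so that $\bar G^i + P y^i$ is again a genuine (degree-two homogeneous) spray. Once that correspondence is in hand, the displayed identity reduces everything to the non-degeneracy of $(g^{il})$ and the positivity of $F$, and the theorem follows immediately.
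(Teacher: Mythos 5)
The paper does not actually prove this theorem: it is recalled from \cite{ChernShenRFG} and \cite{GHAM1903} and used as a black box, so there is no in-paper argument to compare yours against. Judged on its own, your proposal is the standard proof of what the statement is evidently meant to say, and it is essentially correct. You are right that the statement as printed is imprecise: the displayed Hamel equation involves only $F$, so it can only characterize projective equivalence to a metric with vanishing spray, i.e.\ projective flatness, and your reading matches how the paper uses the result immediately afterwards ($\bar G^i=0$ for the Euclidean norm). Your central identity
$$ G^i = P\,y^i + \frac{F}{2}\, g^{il}\bigl(F_{x^ky^l}y^k - F_{x^l}\bigr),\qquad P=\frac{F_{x^k}y^k}{2F}, $$
checks out: it follows from $(F^2)_{x^l}=2FF_{x^l}$, $(F^2)_{x^ky^l}=2F_{y^l}F_{x^k}+2FF_{x^ky^l}$, and $g^{il}F_{y^l}=y^i/F$, the last being a consequence of $g_{ij}y^j=FF_{y^i}$. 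The one step you should make explicit is in the direction ``projectively flat $\Rightarrow$ Hamel'': straightness of geodesics only gives $G^i=\tilde P y^i$ for \emph{some} positively homogeneous $\tilde P$ of degree one, a priori not equal to the specific $P$ above. Lowering the index in your identity gives $H_l:=F_{x^ky^l}y^k-F_{x^l}=2(\tilde P-P)F_{y^l}$; contracting with $y^l$ and using $F_{x^ky^l}y^l=F_{x^k}$ (degree-one homogeneity of $F_{x^k}$ in $y$) yields $H_ly^l=0$, hence $\tilde P=P$ and then $H_l=0$. With that one-line observation inserted, both directions of the equivalence are complete.
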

It is well known that if $\bar{F}$ is  a standard Euclidean norm on $\mathbb{R}^n,$ then spray coefficients of $\bar{F}$ vanish identically, i.e., $ \bar{G}^i=0.$
As geodesics are straight lines in $\mathbb{R}^n,$ we have the following:
\begin{definition}
	For a Finsler metric $F$ on an open subset $\mathcal{U}$ of $\mathbb{R}^n,$ the geodesics of $F$ are straight lines if and only if the spray coefficients satisfy 
	$$G^i=Py^i, $$ where  $P$ is same as defined in theorem \ref{thm5.1}.
\end{definition}
Thus, for a projectively flat spherically symmetric Finsler metric on an open subset $\mathcal{U}$ of $\mathbb{R}^n,$ the coefficient  $ Q, $ occurring in the  equation (\ref{geocoeff}) vanishes identically.
\begin{definition}{\label{def5.4}}\cite{ChernShenRFG}
	A Finsler metric $F$ on an open subset $\mathcal{U}$ of $\mathbb{R}^n$ is called projectively flat if and only if all the geodesics are straight in $\mathcal{U},$ and a Finsler metric $F$ on a manifold $M$ is called locally projectively flat, if at any point, there is a local co-ordinate system $ (x^i) $ in which $F$ is projectively flat.
\end{definition}
Therefore, by theorem (\ref{thm5.1}) and definition (\ref{def5.4}), we have
\begin{theorem}
	A Finsler metric $F$ on an open subset $\mathcal{U}$ of $\mathbb{R}^n$ is projectively flat if and only if it satisfies the following system of differential equations
	$$ {F}_{x^ky^\ell}y^k-{F}_{x^\ell}=0,$$
	where $G^i=Py^i$ is spray coefficient  and $P$ is same as defined in theorem \ref{thm5.1}.
\end{theorem}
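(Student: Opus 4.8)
The plan is to combine Theorem~\ref{thm5.1} with Definition~\ref{def5.4}, exploiting the fact that straight lines in $\mathbb{R}^n$ are precisely the geodesics of the standard Euclidean norm. First I would fix the auxiliary metric $\bar{F}$ in Theorem~\ref{thm5.1} to be the standard Euclidean norm on $\mathcal{U}\subseteq\mathbb{R}^n$. Since a straight line, regarded as a point set, is exactly a geodesic of $\bar{F}$, and since the geodesic spray coefficients of the Euclidean norm vanish identically, i.e.\ $\bar{G}^i=0$, the condition ``all geodesics of $F$ are straight in $\mathcal{U}$'' becomes identical to the condition ``$F$ is projectively equivalent to $\bar{F}$'' in the sense of same geodesics as point sets up to orientation-preserving reparametrization.

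Next, I would invoke Theorem~\ref{thm5.1} directly with this choice of $\bar{F}$. That theorem asserts that $F$ is projectively equivalent to $\bar{F}$ if and only if the Hamel-type equation
\[
F_{x^k y^\ell}y^k - F_{x^\ell}=0
\]
holds, and moreover that the spray coefficients are then related by $G^i=\bar{G}^i+Py^i$ with $P=\dfrac{F_{x^k}y^k}{2F}$. Substituting $\bar{G}^i=0$ immediately yields $G^i=Py^i$, which is precisely the claimed form of the spray coefficients of a projectively flat metric, consistent with the spray-coefficient characterization recorded just before Definition~\ref{def5.4}.

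Assembling both directions gives the equivalence: by Definition~\ref{def5.4}, $F$ is projectively flat on $\mathcal{U}$ exactly when all of its geodesics are straight, which by the first step is exactly projective equivalence to the Euclidean norm, which by the second step is exactly the displayed differential system. This chain of iff-statements proves the theorem, and the accompanying spray relation $G^i=Py^i$ follows as above.

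The main obstacle I anticipate is not computational but conceptual: carefully justifying the equivalence ``all geodesics straight $\Leftrightarrow$ projectively equivalent to the Euclidean norm.'' One must argue that a curve which is straight as a point set can always be reparametrized so as to coincide with an $F$-geodesic, matching the orientation-preserving reparametrization demanded in the definition of projective equivalence, and conversely that projective equivalence to $\bar{F}$ forces every $F$-geodesic to trace a Euclidean straight segment. Once this identification is secured, the remainder is a direct application of Theorem~\ref{thm5.1} together with the vanishing of the Euclidean spray.
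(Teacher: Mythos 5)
Your proposal is correct and follows essentially the same route as the paper, which derives this theorem in one line by combining Theorem \ref{thm5.1} (with $\bar{F}$ the Euclidean norm, so $\bar{G}^i=0$) with Definition \ref{def5.4}. You simply spell out the identification ``geodesics straight $\Leftrightarrow$ projectively equivalent to the Euclidean norm'' that the paper leaves implicit.
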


\begin{definition}  \cite{ShenRFGAIG}
	A Finsler metric $F$ on a smooth $n$-manifold $M$ is called locally dually flat if, at any point, there is a standard co-ordinate system $ (x^i, y^i) $ in $TM,$ $(x^i)$ called adapted local co-ordinate system, such that 
	$$ L_{x^ky^\ell}y^k-2L_{x^\ell}=0,\  \text{where}\ L=F^2.$$
\end{definition}

Recall \cite{XCheZSheYZho2010} the following proposition for further use.
\begin{prop}{\label{Cond_Projectively_and_Dually}}
Let $F = F(x, y)$ be a Finsler metric on an open subset  $ \mathcal{U}$ of $ \mathbb{R}^n $. $F$ is  projectively as well as  dually flat  on $ \mathcal{U}$  if and only if
$F_{x^k} = k_1FF_{y^k},$
where $k_1$ is a constant independent of $y$.
\end{prop}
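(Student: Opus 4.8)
The plan is to translate both hypotheses into partial differential equations in the single function $F$ and then combine them algebraically. By Theorem \ref{thm5.1}, projective flatness is the condition $F_{x^k y^\ell}y^k - F_{x^\ell}=0$, while dual flatness, applied to $L=F^2$, reads $L_{x^k y^\ell}y^k - 2L_{x^\ell}=0$. Everything afterwards is repeated use of these two identities together with Euler's theorem for the positively homogeneous functions $F$ (degree one) and $F_{y^\ell}$ (degree zero), namely $F_{y^k}y^k=F$ and $F_{y^k y^\ell}y^k=0$.

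For the forward direction I would first expand the dual-flatness equation. Since $L_{x^k}=2FF_{x^k}$, one gets
$$ L_{x^k y^\ell}y^k = 2F_{y^\ell}\,F_{x^k}y^k + 2F\,F_{x^k y^\ell}y^k, \qquad 2L_{x^\ell}=4FF_{x^\ell}. $$
Substituting the projective identity $F_{x^k y^\ell}y^k=F_{x^\ell}$ into the first expression collapses the dual-flatness equation to $F_{y^\ell}(F_{x^k}y^k)=FF_{x^\ell}$. Writing $\lambda := (F_{x^k}y^k)/F^2$ and dividing by $F$, this is exactly
$$ F_{x^\ell} = \lambda\,F\,F_{y^\ell}. $$
So the combined flatness forces $F_{x^\ell}$ to be proportional to $FF_{y^\ell}$; it remains only to prove that the proportionality factor $\lambda$ does not depend on $y$.

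This last step is where the real work lies. I would differentiate $F_{x^k}=\lambda FF_{y^k}$ with respect to $y^\ell$ and then contract with $y^k$. Using $F_{y^k}y^k=F$ and $F_{y^k y^\ell}y^k=0$, the contraction produces $F_{x^k y^\ell}y^k = \lambda_{y^\ell}F^2 + \lambda FF_{y^\ell}$. Comparing this with the projective identity $F_{x^k y^\ell}y^k=F_{x^\ell}=\lambda FF_{y^\ell}$ leaves $\lambda_{y^\ell}F^2=0$, and since $F>0$ on the slit tangent bundle we conclude $\lambda_{y^\ell}=0$ for every $\ell$. Thus $\lambda$ is a function of $x$ alone; setting $k_1:=\lambda$ gives $F_{x^k}=k_1FF_{y^k}$ with $k_1$ independent of $y$, as claimed. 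The main obstacle is precisely controlling this $y$-dependence, and the resolution hinges on feeding the projective identity back into the differentiated equation so that the homogeneity relations annihilate every term except $\lambda_{y^\ell}F^2$.

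For the converse I would simply substitute $F_{x^k}=k_1FF_{y^k}$ (with $k_1$ independent of $y$) into each flatness criterion and verify them directly. Differentiating in $y^\ell$ and contracting with $y^k$, the same two Euler relations reduce $F_{x^k y^\ell}y^k$ to $k_1FF_{y^\ell}=F_{x^\ell}$, giving projective flatness, and an analogous computation starting from $L_{x^k}=2k_1F^2F_{y^k}$ yields $L_{x^k y^\ell}y^k=4k_1F^2F_{y^\ell}=2L_{x^\ell}$, giving dual flatness. Both verifications are routine once the homogeneity identities are in hand, so the proposition follows in both directions.
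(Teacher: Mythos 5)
The paper does not actually prove this proposition; it is recalled verbatim from \cite{XCheZSheYZho2010}, so there is no internal proof to compare against. Measured on its own terms, your argument is the standard one and almost all of it is sound: the expansion $L_{x^ky^\ell}y^k=2F_{y^\ell}F_{x^k}y^k+2FF_{x^ky^\ell}y^k$, the substitution of the Hamel identity $F_{x^ky^\ell}y^k=F_{x^\ell}$ to collapse dual flatness to $F_{y^\ell}\,(F_{x^k}y^k)=FF_{x^\ell}$, the contraction argument that forces $\lambda_{y^\ell}F^2=0$, and the routine verification of the converse are all correct uses of the two Euler relations.

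There is, however, one genuine gap in the forward direction: you prove only that $\lambda$ is independent of $y$, i.e.\ $\lambda=\lambda(x)$, and then declare $k_1:=\lambda$, whereas the proposition (and the theorem in \cite{XCheZSheYZho2010} it quotes) asserts that $k_1$ is a \emph{constant}. The missing step is the constancy in $x$, and it does require an argument. Differentiate $F_{x^k}=\lambda FF_{y^k}$ with respect to $x^\ell$ and use the relation itself to rewrite $F_{x^\ell}$ and $F_{y^kx^\ell}=\partial_{y^k}(\lambda FF_{y^\ell})$; every resulting term is symmetric in $(k,\ell)$ except $\lambda_{x^\ell}FF_{y^k}$, so the symmetry of $F_{x^kx^\ell}$ forces $\lambda_{x^\ell}F_{y^k}=\lambda_{x^k}F_{y^\ell}$. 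If $d\lambda\neq 0$ at some $x$, this would make the covector $\bigl(F_{y^k}(x,\cdot)\bigr)$ a scalar multiple of the fixed covector $\bigl(\lambda_{x^k}(x)\bigr)$ for every $y$, and then $F(x,y)=F_{y^k}y^k$ would vanish on the kernel of $d\lambda_x$ (nontrivial for $n\ge 2$), contradicting positivity of the Minkowski norm. Hence $\lambda_{x^\ell}=0$ and $k_1$ is genuinely constant. Without this step your conclusion is strictly weaker than the stated one, even though it happens to suffice for the way the paper later uses the proposition.
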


\section{Spherically symmetric Finsler metrics with isotropic $E$-curvature}
There is a non-Riemannian quantity, $S$-curvature, used to measure the rate of change of the volume form of a Finsler space $ (M,F) $ along geodesics. Suppose $\gamma$ be a geodesic with $\gamma(0)=x, \ \dot{\gamma}(0)=y, $ where $x \in M \ \text{and}\  y \in T_xM,$ and let $ dv=\sigma(x) dx $ be a given volume form of $F$. Then $ S$-curvature is defined by 
$$ S(x,y)= \dfrac{d}{dt}\bigg\{\tau\bigg( \gamma(t), \dot{\gamma}(t)\bigg) \bigg\}\bigg|_{t=0},$$
where  $  \tau= \tau(x,y)$ is  the distortion function of Minkowski norm  $F_x$ on $T_xM$ given by 
$$\tau(x,y)=\ln\left(  \dfrac{\sqrt{det(g_{ij}(y))}}{\sigma(x)}\right) , \ \ y \ \in\  T_xM \backslash \{0\}. $$
Recall \cite{ChernShenRFG} that $S$-curvature can also be represented in the following way
\begin{equation*}
S(x,y)=\dfrac{\partial G^k}{\partial y^k}-y^k \dfrac{\partial}{\partial x^k}\left(\ln \sigma(x) \right). 
\end{equation*}
There is another entity associated with $S$-curvature, called $E$-curvature or mean Berwald curvature given by the $E$-tensor 
$$ \mathcal{E}:=E_{ij} \ dx^i \otimes dx^j,  $$
where
\begin{equation}{\label{Val_E-Curv}}
E_{ij}=\dfrac{1}{2} S_{y^i y^j}(x,y)
=\dfrac{1}{2} \dfrac{\partial ^2}{\partial y^i \partial y^j}\left( \dfrac{\partial G^k}{\partial y^k}-y^k \dfrac{\partial}{\partial x^k}\left(\ln \sigma(x) \right)\right)
=\dfrac{1}{2} \dfrac{\partial ^2}{\partial y^i \partial y^j}\left( \dfrac{\partial G^k}{\partial y^k}\right) 
\end{equation}
 The $E$-tensor $\mathcal{E}$ can be viewed as the family of symmetric forms 
 $$E_y: T_xM \times T_xM \longrightarrow \mathbb{R}  $$ defined by 
 $$ E_y(u,v)=E_{ij}(x,y)u^i v^j,\ \
\text{where}\ \    u=u^i \dfrac{\partial}{\partial x^i},\ v=v^i \dfrac{\partial}{\partial x^i} \in T_xM,\ x \in M. $$ 
The collection $E=\biggl\{E_y : y \in TM \backslash \{0\} \biggr\}$ is called $E$-curvature or mean Berwald curvature.\\
	During the study on spherically symmetric Finsler metrics, we find that the formula for $E$-curvature of a spherically symmetric Finsler metric given in \cite{YCheWSon2015} (Proposition $3.1$) is not correct. In the following theorem,  we give the correct version of that formula.
\begin{theorem}{\label{Thm3.1}}
Let  $F=u \psi(r,s)$ be a   spherically symmetric Finsler metric 	on an open subset  $ \mathcal{U}$ of $ \mathbb{R}^n,$ where $ r = \arrowvert x \arrowvert,\ u= \arrowvert y\arrowvert,\ v= \left\langle x, y \right\rangle,\ s=\dfrac{v}{u}$. Then $E$-curvature of $F$ is given by 

\begin{equation}{\label{E-Curvature}}
\begin{split}
E_{ij}=&\dfrac{1}{2}\biggl[\dfrac{1}{u} \biggl\{ (n+1)P_{ss} +2 Q_s +(r^2-s^2) Q_{sss} -2s Q_{ss} \biggr\} x^i x^j \\
&+\dfrac{1}{u^2}\biggl\{ -(n+1)s P_{ss} -2s Q_s -s(r^2-s^2) Q_{sss} +2s^2 Q_{ss} \biggr\}\left(x^i y^j+y^i x^j \right)\\
&+\dfrac{1}{u^3} \biggl\{ (n+1)s^2 P_{ss} +2s^2 Q_s +s^2(r^2-s^2) Q_{sss} -2s^3 Q_{ss} 
+(n+1)s P_{s} - (n+1) P \\
&+s(r^2-s^2) Q_{ss} -(r^2-s^2) Q_{s}\biggr\}y^i y^j\\
&+\dfrac{1}{u} \biggl\{ -(n+1)s P_{s} +(n+1) P -s(r^2-s^2) Q_{ss} +(r^2-s^2) Q_{s}\biggr\} \delta_{ij} \biggr]
\end{split}
\end{equation}
\end{theorem}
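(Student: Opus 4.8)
The plan is to compute the right-hand side of the defining relation $E_{ij}=\frac{1}{2}\,\partial_{y^i}\partial_{y^j}\!\left(\partial_{y^k}G^k\right)$ in (\ref{Val_E-Curv}) directly from the spray coefficients $G^i=uPy^i+u^2Qx^i$ of (\ref{geocoeff}), treating $P=P(r,s)$ and $Q=Q(r,s)$ as abstract functions of $r$ and $s$. The whole argument rests on the elementary derivative rules $\partial u/\partial y^i=y^i/u$, $\partial v/\partial y^i=x^i$, and $\partial s/\partial y^i=x^i/u-s\,y^i/u^2$, together with the fact that $r=|x|$ is independent of $y$; from these one also extracts the contraction identities $y^k\,\partial_{y^k}u=u$ and $y^k\,\partial_{y^k}s=0$, which are precisely what make the computation collapse at each stage.

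First I would compute the divergence $\Theta:=\partial_{y^k}G^k$. Differentiating $uPy^k$ and summing over $k$ gives $(n+1)uP$, after using $y^k\,\partial_{y^k}s=0$ and $\partial_{y^k}y^k=n$; differentiating $u^2Qx^k$ and contracting against $x^k$ (using $x^kx^k=r^2$ and $x^ky^k=v=su$) yields $u\big(2sQ+(r^2-s^2)Q_s\big)$. Hence $\Theta=uH$ with $H:=(n+1)P+2sQ+(r^2-s^2)Q_s$. This divergence step is exactly the point at which the formula of \cite{YCheWSon2015} must be re-examined, so I would verify it with particular care.

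Next I would differentiate $\Theta=uH(r,s)$ twice with respect to $y$. A single derivative gives the clean expression $\partial_{y^i}\Theta=H_s\,x^i+\dfrac{H-sH_s}{u}\,y^i$. Differentiating once more and sorting the result into the four natural tensorial building blocks $x^ix^j$, $x^iy^j+y^ix^j$, $y^iy^j$ and $\delta_{ij}$, one finds their coefficients to be, respectively, $H_{ss}/u$, $-sH_{ss}/u^2$, $(s^2H_{ss}-H+sH_s)/u^3$ and $(H-sH_s)/u$. The two contributions to the $y^iy^j$ term, one arising from differentiating $H_s$ and one from the $y^i/u$ factor, must be combined carefully, since this is the only place where the cancellations are not immediate.

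Finally I would substitute the explicit $s$-derivatives $H_s=(n+1)P_s+2Q+(r^2-s^2)Q_{ss}$ and $H_{ss}=(n+1)P_{ss}+2Q_s-2sQ_{ss}+(r^2-s^2)Q_{sss}$ into these four coefficients, multiply by $\tfrac12$, and expand; matching term by term then reproduces (\ref{E-Curvature}). The main obstacle is not conceptual but the bookkeeping: keeping the $P$- and $Q$-derivatives straight through both differentiations and confirming the cancellation of the $2sQ$ terms inside $H-sH_s$ and inside $s^2H_{ss}+sH_s-H$, which is what separates the corrected formula from the version under revision.
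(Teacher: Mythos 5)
Your proposal is correct and follows essentially the same route as the paper: compute $\partial_{y^k}G^k = u\left[(n+1)P+2sQ+(r^2-s^2)Q_s\right]$ and then differentiate twice in $y$ using $\partial_{y^i}u=y^i/u$ and $\partial_{y^i}s=x^i/u-sy^i/u^2$; your intermediate expression $H_s x^i+\frac{H-sH_s}{u}y^i$ is exactly the paper's second displayed derivative, and your four tensorial coefficients expand to the stated formula. The only difference is your cleaner bookkeeping via the auxiliary function $H$, which the paper carries out with all terms written explicitly.
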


\begin{proof}
The geodesic spray coefficients of a spherically symmetric Finsler metric given in equation  (\ref{geocoeff}), can be re-written  as
\begin{equation*}{\label{Val_G^k}}
G^k=uP y^k+u^2Q x^k
\end{equation*}
Differentiating it partially w.r.t. $y^k$ and taking the sum over $ k=1 \ \text{to}\  n, $ we get 
\begin{equation}{\label{Val_Derv_G^k}}
\dfrac{\partial G^k}{\partial y^k}=u(n+1)P+2usQ+u(r^2-s^2)Q_s.
\end{equation}
Further, differentiating (\ref{Val_Derv_G^k}) partially w.r.t. $y^i$,  we get 
\begin{equation}{\label{Val_DDerv_G^k}}
\begin{split}
\dfrac{\partial}{\partial y^i} \left(\dfrac{\partial G^k}{\partial y^k}\right)  
=&\left\{ (n+1)P_s+2Q+(r^2-s^2) Q_{ss}\right\} x^i\\
&+ \left\{ -\dfrac{(n+1)s}{u}P_{s} +\dfrac{n+1}{u}P -\dfrac{s(r^2-s^2)}{u}Q_{ss} +\dfrac{r^2-s^2}{u} Q_{s}  \right\} y^i.
\end{split}
\end{equation}
Again, differentiating (\ref{Val_DDerv_G^k}) partially w.r.t. $y^j$, we get 
\begin{equation}{\label{Val_TDerv_G^k}}
\begin{split}
\dfrac{\partial^2}{\partial y^i \partial y^j} \left(\dfrac{\partial G^k}{\partial y^k}\right)  
&=\biggl( (n+1)P_{ss}+2Q_s+(r^2-s^2) Q_{sss}-2sQ_s\biggr) \left(\dfrac{x^i x^j}{u} -\dfrac{s}{u^2} x^i y^j \right) \\
&+\dfrac{1}{u^3}\biggl( (n+1)s P_{s} -(n+1)P +s(r^2-s^2) Q_{ss} -(r^2-s^2) Q_{s}  \biggr) y^i y^j\\
&+\dfrac{1}{u} \biggl( -(n+1)s P_{ss} -s(r^2-s^2)Q_{sss} +2s^2 Q_{ss}-2sQ_s  \biggr)\left(\dfrac{x^j y^i}{u} -\dfrac{s}{u^2} y^i y^j \right)\\
&+\dfrac{1}{u}\biggl( -(n+1)s P_{s} +(n+1)P -s(r^2-s^2) Q_{ss} +(r^2-s^2) Q_{s}  \biggr)\delta_{ij}.
\end{split}
\end{equation}
From equation \ref{Val_TDerv_G^k}, one can easily obtain equation (\ref{E-Curvature}),
which proves the theorem \ref{Thm3.1}.
\end{proof}
Using the formula \ref{E-Curvature} for $E$-curvature, we find the differential equation characterizing a spherically symmetric Finsler metric with isotropic $E$-curvature as follows:

\begin{theorem}{\label{Iso_E_Cur}}
	A spherically symmetric Finsler metrics $F=u\psi(r,s)$ 	on an open subset  $ \mathcal{U}$ of $ \mathbb{R}^n$ has isotropic $E$-curvature if and only if 
\begin{equation}
(n+1)(P-sP_s)+(r^2-s^2)(Q_s-sQ_{ss})=(n+1)c(x)(\psi-s\psi_s),
\end{equation}
where $c=c(x)$ is a scalar function on $M$.
\end{theorem}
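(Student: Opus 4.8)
The plan is to start from the standard characterization of isotropic $E$-curvature and reduce the problem to a comparison of explicit tensor expansions. Recall that a Finsler metric has isotropic $E$-curvature precisely when $E_{ij}=\frac{n+1}{2}c(x)F^{-1}h_{ij}$, where $h_{ij}$ is the angular metric and $c=c(x)$ is a scalar function. Since $g_{ij}=F_{y^i}F_{y^j}+FF_{y^iy^j}$, the angular metric satisfies $h_{ij}=g_{ij}-F_{y^i}F_{y^j}=FF_{y^iy^j}$, so the isotropy condition is equivalent to $E_{ij}=\frac{n+1}{2}c(x)F_{y^iy^j}$. Thus the whole statement reduces to comparing the explicit $E_{ij}$ of Theorem \ref{Thm3.1} with $\frac{n+1}{2}c\,F_{y^iy^j}$.

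Next I would compute $F_{y^iy^j}$ directly for $F=u\psi(r,s)$. Using $u_{y^i}=y^i/u$, $v_{y^i}=x^i$ and $s_{y^i}=x^i/u-sy^i/u^2$, a first differentiation gives $F_{y^i}=\frac{y^i}{u}(\psi-s\psi_s)+\psi_s x^i$, and a second yields an expansion in the four symmetric tensors $x^ix^j$, $x^iy^j+y^ix^j$, $y^iy^j$, $\delta_{ij}$:
$$F_{y^iy^j}=\frac{\psi_{ss}}{u}x^ix^j-\frac{s\psi_{ss}}{u^2}\left(x^iy^j+y^ix^j\right)+\frac{s\psi_s+s^2\psi_{ss}-\psi}{u^3}y^iy^j+\frac{\psi-s\psi_s}{u}\delta_{ij}.$$
Both $E_{ij}$ and $\frac{n+1}{2}c\,F_{y^iy^j}$ are now written in the same basis. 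For $x$ and $y$ linearly independent (a dense condition once $n\ge 3$) these four tensors are linearly independent, so the isotropy condition is equivalent to the equality of the four scalar coefficients.

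The heart of the argument is then to show that these four coefficient equations are not independent but collapse to a single one. Matching the $\delta_{ij}$-coefficients gives exactly
$$(n+1)(P-sP_s)+(r^2-s^2)(Q_s-sQ_{ss})=(n+1)c(\psi-s\psi_s),$$
which is the asserted equation. I expect the remaining three identities to be formal consequences of this one: differentiating the $\delta_{ij}$-equation in $s$ (using that $c=c(x)$ is independent of $s$) should reproduce, up to the factor $-s$, the $x^ix^j$-equation; the mixed-term equation should be simply $-s/u$ times the $x^ix^j$-equation; and the $y^iy^j$-equation should be the combination $s^2\times(x^ix^j\text{-equation})-(\delta_{ij}\text{-equation})$. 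The main obstacle is precisely verifying this redundancy cleanly, namely checking that the differentiation and linear-combination relations among the coefficients of $E_{ij}$ mirror exactly the corresponding relations among the coefficients of $F_{y^iy^j}$, together with justifying the linear independence of the four basis tensors so that the coefficient comparison is legitimate. Once the redundancy is established, the forward implication follows from the $\delta_{ij}$ match alone, while the converse follows because the single equation forces all four coefficients to agree, giving $E_{ij}=\frac{n+1}{2}c\,F_{y^iy^j}$ and hence isotropic $E$-curvature.
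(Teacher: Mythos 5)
The paper states Theorem \ref{Iso_E_Cur} without any written proof (it only says ``Using the formula \ref{E-Curvature} \dots we find \dots'' and defers to \cite{YCheWSon2015}), so your proposal is being measured against an argument the paper leaves implicit; what you describe is exactly that intended argument, carried out correctly and in more detail than the paper provides. Your reduction of isotropy to $E_{ij}=\tfrac{n+1}{2}c\,F_{y^iy^j}$ via $h_{ij}=FF_{y^iy^j}$ is right, your expansion of $F_{y^iy^j}$ is correct, and the redundancy you anticipate does hold: writing $A=(n+1)P_{ss}+2Q_s+(r^2-s^2)Q_{sss}-2sQ_{ss}$ and $D=(n+1)(P-sP_s)+(r^2-s^2)(Q_s-sQ_{ss})$ for ($u$ times) the $x^ix^j$ and $\delta_{ij}$ coefficients of $2E_{ij}$, one checks $\tfrac{dD}{ds}=-sA$, mirroring $\tfrac{d}{ds}(\psi-s\psi_s)=-s\psi_{ss}$, and the mixed and $y^iy^j$ coefficients are the combinations $-\tfrac{s}{u}A$ and $\tfrac{1}{u^2}(s^2A-D)$ on both sides, exactly as you predict. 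Two small points worth making explicit if you write this up: recovering the $x^ix^j$ equation from the differentiated $\delta_{ij}$ equation involves cancelling a factor of $s$, which is legitimate by continuity at $s=0$; and the linear independence of $x^ix^j$, $x^iy^j+y^ix^j$, $y^iy^j$, $\delta_{ij}$ used in the forward direction requires $n\ge 3$ (for $n=2$ the identity matrix lies in the span of the other three), a restriction the paper never mentions.
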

The equation \ref{Iso_E_Cur} is same as the one given in  \cite{YCheWSon2015}.
\section{Rigidity of  spherically symmetric Finsler metrics}
The main aim of this section is to prove the rigidity theorem \ref{Main_Result}, but before that we prove the following theorem. 
\begin{theorem}{\label{SSFM_Cond_Projectively_and_Dually}}
Let  $F=u \psi(r,s)$ be a spherically symmetric Finsler metric 	on an open subset  $ \mathcal{U}$ of $ \mathbb{R}^n,$ where $ r = \arrowvert x \arrowvert,\ u= \arrowvert y\arrowvert,\ v= \left\langle x, y \right\rangle,\ s=\dfrac{v}{u}$. Then $F$ is  projectively as well as  dually flat  on $ \mathcal{U}$  if and only if following equations are satisfied:
	$$ \dfrac{1}{r} \psi_r=k_1 \psi \psi_s,\ \ 
	\psi_s= k_1\left( \psi^2-s \psi \psi_s \right),  $$
	where $k_1$ is a constant independent of $y$.
\end{theorem}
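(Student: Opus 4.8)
The plan is to reduce the statement directly to Proposition \ref{Cond_Projectively_and_Dually}, which characterizes metrics that are simultaneously projectively and dually flat by the single system $F_{x^k} = k_1 F F_{y^k}$ with $k_1$ constant in $y$. The whole argument then amounts to rewriting this system for the spherically symmetric ansatz $F = u\psi(r,s)$ and reading off the two scalar equations, so the work is essentially a chain-rule bookkeeping exercise followed by one linear-independence observation.

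First I would record the first-order derivatives of the building blocks $r = |x|$, $u = |y|$, $v = \langle x, y\rangle$ and $s = v/u$. A direct computation gives $r_{x^k} = x^k/r$, $v_{x^k} = y^k$ and $s_{x^k} = y^k/u$ (since $u$ is independent of $x$), together with $u_{y^k} = y^k/u$, $v_{y^k} = x^k$ and $s_{y^k} = x^k/u - s\,y^k/u^2$. Applying the chain rule to $F = u\psi(r,s)$ then yields the compact expressions
$$F_{x^k} = \frac{u}{r}\psi_r\, x^k + \psi_s\, y^k, \qquad F_{y^k} = \psi_s\, x^k + \frac{\psi - s\psi_s}{u}\, y^k.$$

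Next I would substitute these into $F_{x^k} = k_1 F F_{y^k}$, using $F = u\psi$, and collect the $x^k$ and $y^k$ terms. This rewrites the system as
$$\left(\frac{u}{r}\psi_r - k_1 u\psi\psi_s\right) x^k + \bigl(\psi_s - k_1\psi(\psi - s\psi_s)\bigr) y^k = 0, \qquad k = 1,\ldots,n,$$
where the two bracketed factors are scalar functions of $(r,u,s)$ only, independent of the index $k$.

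The one point requiring care — and the main, though mild, obstacle — is passing from this single vector identity to the two separate scalar equations. At any point where $x$ and $y$ are linearly independent, the coordinate vectors $(x^k)_k$ and $(y^k)_k$ are linearly independent, which forces each bracket to vanish; this produces exactly $\tfrac{1}{r}\psi_r = k_1\psi\psi_s$ and $\psi_s = k_1(\psi^2 - s\psi\psi_s)$. Since these are identities in $(r,s)$ and $\psi$ is smooth, they hold on an open dense set and hence everywhere, the dependent locus $s = \pm r$ being negligible. The converse is immediate: if both scalar equations hold then the two brackets vanish identically, so $F_{x^k} = k_1 F F_{y^k}$ is satisfied for every $k$, and Proposition \ref{Cond_Projectively_and_Dually} returns that $F$ is projectively as well as dually flat.
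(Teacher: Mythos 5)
Your proposal is correct and follows essentially the same route as the paper: compute $F_{x^k}$ and $F_{y^k}$ by the chain rule, substitute into the characterization $F_{x^k}=k_1FF_{y^k}$ from Proposition \ref{Cond_Projectively_and_Dually}, and split the resulting vector identity into its $x^k$ and $y^k$ components. The only difference is that you explicitly justify the splitting via the linear independence of $x$ and $y$ (away from the negligible locus $s=\pm r$), a point the paper leaves implicit.
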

\begin{proof}
	We have 
	$$F=u \psi(r,s)$$
	Differentiating it partially w.r.t. $ x^k $ and $y^k$ respectively, we get
	$$ F_{x^k}=\dfrac{u}{r} \psi_r x^k+\psi_s y^k,$$
	and 
	$$ F_{y^k}= \psi_s x^k- \dfrac{s}{u} \psi_s y^k+ \dfrac{1}{u} \psi y^k. $$
	By proposition \ref{Cond_Projectively_and_Dually}, $F$ is projectively as well as   dually flat  on $ \mathcal{U}$  if and only if
	$$F_{x^k} = k_1FF_{y^k},$$
	where $k_1$ is a constant independent of $y$.\\
	Therefore, $F$ is projectively as well as   dually flat  on $ \mathcal{U}$  if and only if
	
	$$ \dfrac{u}{r} \psi_r x^k+\psi_s y^k=k_1 u \psi\left( \psi_s x^k- \dfrac{s}{u} \psi_s y^k+ \dfrac{1}{u} \psi y^k \right).$$
From this we conclude that $F$ is projectively as well as   dually flat  on $ \mathcal{U}$  if and only if 
	$$ \dfrac{1}{r} \psi_r=k_1 \psi \psi_s,\ \ 
\psi_s= k_1\left( \psi^2-s \psi \psi_s \right).  $$
\end{proof}


\begin{cor}{\label{corr}}
	For a  spherically symmetric Finsler metric  $F=u \psi(r,s)$ on an open subset  $ \mathcal{U}$ of $ \mathbb{R}^n,$ where $ r = \arrowvert x \arrowvert,\ u= \arrowvert y\arrowvert,\ v= \left\langle x, y \right\rangle,\ s=\dfrac{v}{u}$,  $P$ and $Q$ in its geodesic spray coefficients satisfy the following:
	\begin{equation}{\label{PFDF_Geo_coeff_P_Q}}
	P=\dfrac{k_1}{2} \psi, \ \ Q=0.
	\end{equation} 
\end{cor}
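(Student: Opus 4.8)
The plan is to derive the two expressions in Corollary \ref{corr} directly from Theorem \ref{SSFM_Cond_Projectively_and_Dually}, which characterizes projectively and dually flat spherically symmetric metrics by the pair of equations $\frac{1}{r}\psi_r = k_1\psi\psi_s$ and $\psi_s = k_1(\psi^2 - s\psi\psi_s)$. First I would recall from Section 4 (the discussion following Definition \ref{def5.4}) that projective flatness forces the coefficient $Q$ in the geodesic spray (\ref{geocoeff}) to vanish identically, since the spray reduces to $G^i = Py^i$. This gives $Q=0$ immediately, with no further computation.

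The substance is therefore the identity $P = \frac{k_1}{2}\psi$. I would start from the formula (\ref{valP}) for $P$, namely
\[
P = \frac{s\psi_r + r\psi_s}{2r\psi} - \frac{s\psi + (r^2-s^2)\psi_s}{\psi}\,Q,
\]
and use $Q=0$ to collapse it to $P = \frac{s\psi_r + r\psi_s}{2r\psi}$. The goal is then to show this numerator simplifies to $k_1\psi^2 r$. Using the first equation of Theorem \ref{SSFM_Cond_Projectively_and_Dually} I would replace $\frac{1}{r}\psi_r$ by $k_1\psi\psi_s$, so that $s\psi_r = r s\, k_1\psi\psi_s$ and hence $\frac{s\psi_r}{2r\psi} = \frac{s k_1 \psi_s}{2}$. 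For the remaining term $\frac{r\psi_s}{2r\psi} = \frac{\psi_s}{2\psi}$, I would invoke the second equation $\psi_s = k_1(\psi^2 - s\psi\psi_s)$, which rearranges to $\psi_s = k_1\psi(\psi - s\psi_s)$, giving $\frac{\psi_s}{2\psi} = \frac{k_1(\psi - s\psi_s)}{2}$. Adding the two contributions yields $P = \frac{sk_1\psi_s}{2} + \frac{k_1(\psi - s\psi_s)}{2} = \frac{k_1}{2}\psi$, as claimed.

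The main obstacle, such as it is, will be purely bookkeeping: ensuring that the two characterizing equations are applied consistently so that the $s\psi_s$ terms cancel cleanly and only $\frac{k_1}{2}\psi$ survives. There is no genuine analytic difficulty here, since both steps are algebraic substitutions into the closed-form expression (\ref{valP}) for $P$. I would present the computation as a short chain of equalities, emphasizing the cancellation of the $s\psi_s$ cross-terms, and conclude that $P = \frac{k_1}{2}\psi$ together with $Q = 0$, which is exactly (\ref{PFDF_Geo_coeff_P_Q}).
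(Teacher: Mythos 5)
Your proof is correct: the paper states Corollary \ref{corr} without any proof, and your argument (taking $Q=0$ from the projective-flatness discussion after Definition \ref{def5.4}, then substituting the two equations of Theorem \ref{SSFM_Cond_Projectively_and_Dually} into formula (\ref{valP}) so that the $s\psi_s$ terms cancel) is exactly the natural derivation the authors intend, and the algebra checks out. As a minor remark, $Q=0$ can also be verified directly from the two characterizing equations by differentiating $\psi_s(1+k_1 s\psi)=k_1\psi^2$ in $s$ and $\tfrac{1}{r}\psi_r=k_1\psi\psi_s$ in $s$, which makes the numerator of (\ref{valQ}) vanish without appealing to the earlier projective-flatness discussion.
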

\vspace*{.1cm}
\noindent \textbf{\Large Proof of rigidity theorem \ref{Main_Result} }\\
\vspace*{.05cm}\\
Since  $Q=0$ for projectively flat spherically symmetric Finsler metric,  by the theorem \ref{Iso_E_Cur}, we see that  
$F$ has isotropic $E$-curvature if and only if 

\begin{equation}{\label{Der_P_Psi}}
P-sP_s=c(x)(\psi-s\psi_s),
\end{equation}
where $c=c(x)$ is a scalar function on $M$.\\
From (\ref{Der_P_Psi}), we get
$$\dfrac{d}{ds}\left(\dfrac{P}{s} \right) = c(x) \dfrac{d}{ds}\left(\dfrac{\psi}{s} \right) $$
which implies
$$  P = c(x) \psi -s \gamma(r). $$
By corollary \ref{corr}, we have $P=\dfrac{k_1}{2} \psi.$\\
Therefore, 
$$ \psi= \dfrac{2s}{2c(x)-k_1} \gamma(r),  $$
and consequently 
\begin{equation}
P=k(r) s, \ \ \text{where  \ } k(r)= \dfrac{k_1\ \gamma(r)}{2c(x)-k_1}. 
\end{equation}
Putting the values of $P$ and $Q$ in equation (\ref{valP}), we get 
\begin{equation}{\label{Hash}}
2rsk(r) \psi -s \psi_r-r \psi_s=0.
\end{equation}
Differentiating (\ref{Hash}) w.r.t. $s$, we get 
\begin{equation}{\label{Star}}
2rk(r) \psi +2rsk(r) \psi_s-r\psi_{ss}-s\psi_{rs}-\psi_r=0.
\end{equation}
From equation (\ref{valQ}), we have 
\begin{equation}{\label{2Star}}
r \psi_{ss}+s \psi_{rs}-\psi_r =0.
\end{equation}
Solving equations (\ref{Star}) and (\ref{2Star}), we get
\begin{equation}{\label{2Hash}}
rk(r) \psi +rsk(r) \psi_s-\psi_r=0.
\end{equation}
By the equations (\ref{Hash}) and (\ref{2Hash}), we get

$$ \psi= k_2(r) \sqrt{1+s^2 k(r)}, \ \ \text{where}\ \ k_2(r)\  \text{is a function independent of s}, $$
which yields
$$ F= k_2(r) \sqrt{\arrowvert y\arrowvert^2+k(r) \left\langle x, y \right\rangle^2} $$
which is a Riemannian metric.

\end{document}